\newcommand{\id}{{\boldsymbol{\mathbbm{1}}}}
\newcommand{\Chi}{\raisebox{0.5ex}{\mbox{{\Large $\chi$}}}}
\newcommand{\Partial}{\raisebox{0ex}{\mbox{{\large $\partial$}}}}
 \newtheorem{theorem}{Theorem}[section]
 \newtheorem{lemma}[theorem]{Lemma}
  \newtheorem{postulate}[theorem]{Postulate}
 \newcommand{\R}{\mathbb{R}}
 \newcommand{\tel}[1]{\frac{1}{#1}}
 \DeclareMathOperator{\Sym}{Sym}
 \DeclareMathOperator{\dev}{dev}
\def\barr{\begin{array}}
\def\tr{\textrm{tr}}
\def\dd{\displaystyle}
\def\barr{\begin{array}}
\def\earr{\end{array}}
\def\bec#1{\begin{equation}\label{#1}}
\def\becn{\begin{equation*}}
\def\endec{\end{equation}}
\def\endecn{\end{equation*}}
\let\@fnsymbol\@arabic
\begin{document}

\title{\vspace*{0cm}Loss of ellipticity for non-coaxial plastic deformations in additive logarithmic finite strain plasticity}
\author{
Patrizio Neff\thanks{Corresponding author: Patrizio Neff,  \ \ Head of Lehrstuhl f\"{u}r Nichtlineare Analysis und Modellierung, Fakult\"{a}t f\"{u}r
Mathematik, Universit\"{a}t Duisburg-Essen,  Thea-Leymann Str. 9, 45127 Essen, Germany, email: patrizio.neff@uni-due.de}\quad
and \quad
Ionel-Dumitrel Ghiba\thanks{Ionel-Dumitrel Ghiba, \ \ \ \ Lehrstuhl f\"{u}r Nichtlineare Analysis und Modellierung, Fakult\"{a}t f\"{u}r Mathematik,
Universit\"{a}t Duisburg-Essen, Thea-Leymann Str. 9, 45127 Essen, Germany;  Alexandru Ioan Cuza University of Ia\c si, Department of Mathematics,  Blvd.
Carol I, no. 11, 700506 Ia\c si,
Romania; and  Octav Mayer Institute of Mathematics of the
Romanian Academy, Ia\c si Branch,  700505 Ia\c si, email: dumitrel.ghiba@uni-due.de, dumitrel.ghiba@uaic.ro}}

\maketitle

\begin{abstract}
In this paper we consider the additive logarithmic finite strain plasticity formulation from the view point of loss of ellipticity in elastic unloading. We prove that even if an elastic energy $F\mapsto W(F)=\widehat{W}(\log U)$ defined in terms of logarithmic strain $\log U$, where $U=\sqrt{F^T\, F}$, happens to be everywhere rank-one convex as a function of $F$, the new function $F\mapsto \widetilde{W}(F)=\widehat{W}(\log U-\log U_p)$ need not remain rank-one convex at some given plastic stretch $U_p$ (viz. $E_p^{\log}:=\log U_p$). This is in complete contrast to multiplicative plasticity (and infinitesimal plasticity) in which $F\mapsto W(F\, F_p^{-1})$ remains rank-one convex at every plastic distortion $F_p$ if $F\mapsto W(F)$ is rank-one convex ($\nabla u \mapsto \|{\rm sym} \nabla u-\varepsilon_p\|^2$  remains convex). We show this {disturbing}  feature of the additive logarithmic plasticity model  with the help of a recently introduced  family of exponentiated Hencky energies.

\medskip

\noindent{\textbf{Key words:}  Hencky strain, logarithmic strain, natural strain, true strain, Hencky energy,
multiplicative decomposition, elasto-plasticity,  ellipticity domain,  isotropic formulation,  additive  plasticity.}
\end{abstract}

\section{Introduction}

Geometrically nonlinear plasticity is  a field of intensive ongoing research. There exist several fundamentally different approaches which reduce, more or less, to the infinitesimal  model based on the additive split of the infinitesimal strain tensor $\varepsilon={\rm sym}\nabla u$ into symmetric  elastic and plastic strains \cite{neff2009notes,Neff_Wieners,Neff01d,Neff_techmech07} $\varepsilon=\varepsilon_e+\varepsilon_p$. We assume the reader to be familiar with the general framework of finite strain plasticity models.

The involved nonlinearities make it difficult, both from an analysis and algorithmic point of view to obtain definitive results. There exists, however, one well known and much used methodology to reduce the algorithmic complexity dramatically. It is based on the matrix-logarithm and the introduction of a so called  plastic metric $C_p\in{\rm PSym}(3)$ together with an additive ansatz for elastic strains $E_e=\log C-\log C_p$. We refer to these models as additive logarithmic\footnote{We refrain completely from discussing hypoplastic models based on the so-called logarithmic rate, see e.g. \cite{bruhns1999self,xiao2000consistent} which also involve the Hencky strain.}. Within these, basically the small strain linearized framework is simply lifted to the geometrically nonlinear setting through the properties of the logarithm. Thus, frame-indifference, thermodynamical admissibility, plastic volume constraint, associative flow rule, principle of maximal dissipation  etc. are all easily satisfied.

In this paper, however,  we want to exhibit a major drawback of this approach which makes the additive logarithmic ansatz in our view inadmissible in those cases where large plastic deformations need to be considered, as are encountered e.g. in elastic spring back processes in the automobile industry.  Without loss of generality, we will concentrate our exposition to the  completely isotropic setting, in which most prominently the quadratic Hencky-logarithmic strain energy appears.

\subsection{Notation}\label{auxnot}

 For $a,b\in\R^n$ we let $\langle {a},{b}\rangle_{\R^n}$  denote the scalar product on $\R^n$ with
associated vector norm $\|a\|_{\R^n}^2=\langle {a},{a}\rangle_{\R^n}$.
We denote by $\R^{n\times n}$ the set of real $n\times n$ second order tensors, written with
capital letters.
The standard Euclidean scalar product on $\R^{n\times n}$ is given by
$\langle {X},{Y}\rangle_{\R^{n\times n}}=\tr{(X Y^T)}$, and thus the Frobenius tensor norm is
$\|{X}\|^2=\langle {X},{X}\rangle_{\R^{n\times n}}$. In the following we do not adopt any summation convention and we omit {the   subscript} $\R^{n\times
n}$ in writing the Frobenius tensor norm.    The identity tensor on $\R^{n\times n}$ will be denoted by $\id$, so that
$\tr{(X)}=\langle {X},{\id}\rangle$, while $\dev_n X=X-\frac{1}{n}\, \tr(X)\cdot\id$ is the n-dimensional deviatoric part of a second
order tensor $X{\in\R^{n\times n}}$.  We let $\Sym(n)$ and ${\rm PSym}(n)$ denote the symmetric and positive definite symmetric tensors respectively. We adopt
the usual abbreviations of Lie-group theory, i.e.,
${\rm GL}(n):=\{X\in\R^{n\times n}\;|\det{X}\neq 0\}$ denotes the general linear group,
${\rm SL}(n):=\{X\in {\rm GL}(n)\;|\det{X}=1\},\;
\mathrm{O}(n):=\{X\in {\rm GL}(n)\;|\;X^TX=\id\},\;{\rm SO}(n):=\{X\in {\rm GL}(n,\R)\;| X^T X=\id,\;\det{X}=1\}$, ${\rm GL}^+(n):=\{X\in\R^{n\times
n}\;|\det{X}>0\}$  is the group of invertible matrices with positive determinant,
 $\mathfrak{so}(3):=\{X\in\mathbb{R}^{3\times3}\;|X^T=-X\}$ is the Lie-algebra of  skew symmetric tensors
and $\mathfrak{sl}(3):=\{X\in\mathbb{R}^{3\times3}\;| \tr({X})=0\}$ is the Lie-algebra of traceless tensors. For $X\in {\rm PSym}(n)$,   $X=\sum_{i=1}^n \lambda_i \, N_i\otimes N_i$, where
$\lambda_i$  are the eigenvalues and $N_i$ are the eigenvectors of $X$,  we consider  $\log X=\sum_{i=1}^n \log \lambda_i (N_i\otimes N_i)$.
   {Here and i}n the following the superscript
$^T$ is used to denote transposition. For all  vectors $\xi,\eta\in\R^3$ we have the (dyadic) tensor product
$(\xi\otimes\eta)_{ij}=\xi_i\,\eta_j$. The set of positive real numbers is
denoted by $\R_+:=(0,\infty)$, while $\overline{\R}_+{:}=\R_+\cup \{\infty\}$.

Let us consider $W(F)$ to be the strain energy function of an elastic material in which $F=\nabla \varphi$ is the gradient of a deformation from a reference configuration
to a configuration in the Euclidean 3-space; $W(F)$ is measured per unit volume of the reference configuration. The domain of $W(\cdot)$ is ${\rm
GL}^+(n)$.  We denote by $C=F^T F$ the right Cauchy-Green strain tensor, by $B=F\, F^T$ the left Cauchy-Green (or Finger) strain tensor, by $U=\sqrt{F^T F}$ the right
stretch tensor, i.e., the unique element of ${\rm PSym}(n)$ for which $U^2=C$ and by  $V$ the  left stretch tensor, i.e., the unique element of ${\rm
PSym}(n)$ for which $V^2=B$. Here,  we are only concerned with rotationally symmetric functions (objective and isotropic), i.e.,
 $
 W(F)={W}(Q_1^T\, F\, Q_2) \ \forall \, F=R\,U=V R\in {\rm GL}^+(n),\ Q_1,Q_2,R\in{\rm SO}(n).
 $ We  denote by $S_1=D_F[W(F)]$ the first Piola-Kirchhoff stress tensor and by $\sigma=\frac{1}{J}\,  S_1\, F^T$ the Cauchy stress tensor. In this paper, the scalar product will be always denoted by $\langle \,,\, \rangle$, while ``$\cdot$'' will denote the multiplication with scalars or the multiplication of matrices. We will also use ``.'' to denote  by $D_F[W(F)].\, H$ the Fr\'echet derivative of the function $W(F)$ of   a tensor $F$ applied  to the tensor-valued increment $H$.

 Further  $\mu>0$ is the infinitesimal shear modulus,
$\kappa=\frac{2\mu+3\lambda}{3}>0$ is the infinitesimal bulk modulus with $\lambda$ the first Lam\'{e} constant, $C_e:=F_e^TF_e$ is the elastic strain tensor,  $U_e$ is the right elastic
stretch tensor, i.e. the unique element of ${\rm PSym}(n)$ for which $U_e^2=C_e$  and
\begin{align}
F=F_e\cdot  F_p
\end{align}
is the multiplicative decomposition of the deformation gradient \cite{lee1969elastic,neff2009notes,carstensen2002non,mielke2003energetic,mielke2002finite,NeffGhibaCompIUTAM}, while $C_p:=F_p^TF_p$ is the plastic  metric and  $U_p\in{\rm PSym}(n)$ is the plastic stretch,  $U_p^2=C_p$.

\subsection{The Hencky energy and elasto-plasticity}

 As hinted at above, the logarithmic strain space
description   is arguably the {simplest} algorithmic  approach to finite plasticity, suitable for the phenomenological description of isotropic
polycrystalline metals if the structure of geometrically linear theories {is} used with respect to the Lagrang{i}an logarithmic strain $\log C$. As deduced by Itskov
\cite{itskov2004application}, the logarithmic strain yields the most appropriate results in case of rigid-plastic material
subjected to simple shear.

 In isotropic finite strain computational   hyperelasto-plasticity \cite{bertram1999alternative,shutov2013analysis,shutov2008finite,reese2008finite,dettmer2004theoretical,Reese97a} the mostly used elastic energy is the quadratic Hencky logarithmic energy
\cite{Simo85,HutterSFB02,BatheEterovic,xiao2000consistent,bruhns1999self,henann2009large} (see also \cite{heiduschke1995logarithmic,
sansour2001dual,peric1992model,papadopoulos1998general,gabriel1995some,mosler2007variational}).
Among the works which use the Hencky strain in
elasto-plasticity we may also mention
\cite{simo1992algorithms,MieheApel,papadopoulos1998general,caminero2011modeling,peric1992model,peric1999new,dvorkin1994finite,geers2004finite}.
The  Hencky energy $W_{_{\rm H}}$ is the energy considered by the late  J.C. Simo (see Eq. (3.4), page 147, from \cite{simo1993recent} and also \cite{armero1993priori})
because
\begin{align}
W_{_{\rm H}}(F_e):&=\mu\,\|\dev_n\log \sqrt{F_e^TF_e}\|^2+\frac{\kappa}{2}\,[\tr(\log \sqrt{F_e^TF_e})]^2=\frac{\mu}{4}\,\|\dev_n\log
{F_e^TF_e}\|^2+\frac{\kappa}{8}\,[\tr(\log F^T F)]^2\\
&=\frac{\mu}{4}\,\|\dev_n\log {F_e^TF_e}\|^2+\frac{\kappa}{2}\,[\log (\det F)]^2.\notag
\end{align}
The Hencky energy  $W_{_{\rm H}}$ has the correct behaviour for extreme strains in the sense that $W(F_e)\rightarrow\infty$ as $\det
F_e\rightarrow0$ and, likewise $W(F_e)\rightarrow\infty$ as $\det F_e\rightarrow\infty$ \cite[page 392]{Simo98a}, but  it cannot be a polyconvex function of the deformation gradient.  However,  the model provides an excellent approximation for moderately large elastic
strains,  which is  superior to the usual Saint-Venant-Kirchhoff model of finite elasticity.   Several models of such a type have been considered in \cite{Neff_Wieners,krishnan2014polyconvex}. The decisive advantage of using the energy $W_{_{\rm H}}$ compared to other elastic
  energies stems from the fact that computational implementations of elasto-plasticity  \cite{gabriel1995some} based on the additive decomposition
  $\varepsilon=\varepsilon_e+\varepsilon_p$ in infinitesimal models\footnote{We need to be a little more specific. For the additive model in the format $\|\log C-\log C_p\|^2$ the complete systems of equations of the plastic flow rule are {\bf identical} to the infinitesimal additive model, while for the truly multiplicative model the return mapping algorithm is {\bf similar} to the infinitesimal case. } \cite{ebobisse2010existence,neff2009numerical,chleboun16extension}, can
  be used with nearly no changes also in isotropic finite  strain problems \cite[page 392]{Simo98a}. The computation of the elastic equilibrium at given plastic distortion $F_p$ suffers, however, under the well-known non-ellipticity  of $W_{_{\rm H}}$ \cite{Bruhns01,Balzani_Schroeder_Gross_Neff05,Neff_Diss00,Hutchinson82}. We know that
$W_{_{\rm H}}$ is Legendre-Hadamard elliptic in a  neighbourhood of the identity if $\lambda,\mu>0$, $\lambda_i\in[0.21162...,1.39561...]$  (see \cite{Bruhns01,Bruhns02JE}),
therefore $F\mapsto W_{_{\rm H}}(F)$ is  Legendre-Hadamard  elliptic for moderately large strains in the previous sense.

Moreover, the elastic Hencky energy has been shown \cite{RobertNeff,Neff_Osterbrink_Martin_hencky13} to have a fundamental differential geometric meaning, not shared by any other elastic energy, i.e.
\begin{align}
{\rm dist}^2_{{\rm geod}}\left( \frac{F}{(\det F)^{1/n}}, {\rm SO}(n)\right)&={\rm dist}^2_{{\rm geod,{\rm SL}(n)}}\left( \frac{F}{(\det F)^{1/n}}, {\rm
SO}(n)\right)=\|\dev_n \log U\|^2 \,,\\
{\rm dist}^2_{{\rm geod}}\left((\det F)^{1/n}\cdot \id, {\rm SO}(n)\right)&={\rm dist}^2_{{\rm geod,\mathbb{R}_+\cdot \id}}\left((\det F)^{1/n}\cdot \id,
\id\right)=|\log \det F|^2\,,\notag
\end{align}
where ${\rm dist}^2_{{\rm geod,\mathbb{R}_+\cdot \id}}$ and ${\rm dist}^2_{{\rm geod,{\rm SL}(n)}}$ are the canonical left invariant geodesic distances on
the Lie-group ${\rm SL}(n)$ and on the group $\mathbb{R}_+\cdot\id$, respectively (see
\cite{neff2013hencky}). For these investigations new mathematical tools had to be
discovered \cite{Neff_Nagatsukasa_logpolar13} also having consequences for the classical polar  decomposition.

\subsection{Additive metric plasticity}
The  Green-Naghdi additive plasticity models \cite{Naghdi65,green1971some} are well known (see e.g. \cite{casey1980remark,casey1984,Simo85}). They are based on the additive split  of the total Green-Saint-Venant strain $E=\frac{1}{2}(C-\id)$ into
\begin{align}
E=E_e+E_p,
\end{align}
where $E_e=\frac{1}{2}(C_e-\id)$ is the elastic Green-Saint-Venant strain and $E_p=\frac{1}{2}(C_p-\id)$ is the plastic strain. While $E_e=E-E_p$ was identified with elastic strain in the original work of Green
and Naghdi \cite{Naghdi65}, in later works \cite{green1971some} this identification
was dropped (see also \cite{lubliner2008plasticity}). This decomposition is  justified  as an approximation that is valid \cite{lubliner2008plasticity} when (i) small plastic
deformations are accompanied by moderate elastic strains, (ii) small elastic
strains are accompanied by moderate plastic deformations, or (iii) small
strains are accompanied by moderate rotations. A formulation based on elastic strain-measures like
\begin{align}
W(E-E_p):=\frac{\mu}{4}\, \|E-E_p\|^2+\frac{\lambda}{8}\, [\tr(E-E_p)]^2
\end{align}
is not even rank-one convex for zero plastic strain (this is the well-known deficiency of the Saint-Venant-Kirchhoff model \cite{Neff_Diss00,Raoult86}).

Another  finite plasticity  model using logarithmic strains is taking the additive  elastic Hencky energy \cite{miehe2009finite,miehe2011coupled,papadopoulos2001formulation,lu2004covariant,loblein2003application,schroder2002simple,schmidt2005some} in the format
\begin{align}
\widehat{W}_{_{\rm H}}(\log U-\log U_p)
&={\mu}\, \|\dev_3[\log U-\log U_p]\|^2+\frac{\kappa}{2}\,[\tr(\log U)]^2,\notag
\end{align}
as a starting point, in which plastic incompressibility $\tr(\log U_p)=0$ ($\det U_p=1$) is already included. This approach is based on the ad hoc introduction of the symmetric elastic strain tensor
$$E_e^{\log}:=E^{\log}-E_p^{\log}= \log U- \log U_p=\frac{1}{2}\, \log C-\frac{1}{2}\, \log C_p,$$
in the spirit of \cite{Naghdi65}.
An additive decomposition
of logarithmic strain has  been used in \cite{sansour2003viscoplasticity}  to construct a viscoplasticity theory. Note that the additive decomposition of the total strain is also considered in thermo-mechanics, see e.g.  \cite{casey1998elastic,lee2001theory}. Eisenberg et al. \cite{eisenberg1977observations} employ an additive decomposition of strain for thermo-plastic materials
and provide experimental correlation to theory. For numerical results see also \cite{ulz2009green,ulz2011finite}.

Despite appearance, we need to remark that this additive model has not much  in common with models based on the multiplicative decomposition, which will be shortly discussed in Section \ref{sectionrank}. Indeed, the independent plastic variable in the additive model is, in fact, $E_p^{\log}=\frac{1}{2}\,\log C_p=\log U_p$, which is determined to be trace free.  Moreover, the definition of $E_p^{\log}$ avoids any ambiguity of plastic rotation. Here, $C_p=U_p^2\in {\rm PSym}(3)$ takes only formally  the role of a local plastic  metric.
The flow rule will be an evolution equation in terms of  the independent variable $E_p^{\log}=\log U_p\in \mathfrak{sl}(3)\cap {\rm Sym}(3)$
\begin{align}\label{logcchi00}
\frac{\rm d}{{\rm d t}}[\log U_p]\in\, \Partial \Chi (\dev_3\Sigma) \qquad \Leftrightarrow\qquad \frac{\rm d}{{\rm d t}}[\log U_p]=\lambda_{\rm p}^+\, \frac{\dev_3\Sigma}{\|\dev_3\Sigma\|},
\end{align}
where $\Sigma=D\,[\widehat{W}_{_{\rm H}}(\log U-\log U_p)]$, $\Partial \Chi (\dev_3\Sigma)$ is the subdifferential  of the indicator function
$ \Chi (\dev_3\Sigma)$  of the convex elastic domain
$
\mathcal{E}_{\rm e}({\Sigma},{\frac{2}{3}}\, \sigma_{\textbf{y}}^2):=\left\{{\Sigma}\in {\rm Sym}(3)\, \Big| \, \|\dev_3{\Sigma}\|^2\leq {\frac{2}{3}}\, \sigma_{\textbf{y}}^2\right\},
$
 where $ \sigma_{\textbf{y}}$ is the yield limit and the plastic multiplier $\lambda^+_{\rm p}$ satisfies the  Karush–Kuhn–Tucker (KKT)-optimality constraints
\begin{align}\label{subgama}
\lambda^+_{\rm p}\geq 0, \qquad \Chi(\dev_3\Sigma)\leq0, \qquad \lambda^+_{\rm p}\, \Chi(\dev_3\Sigma)=0.
\end{align}
Note that since $U$ and $U_p$ do not commute, $\log U-\log U_p \neq \log(U\, U_p^{-1})$ in general. Having $\tr(\log U_p)=0$ consistent with the flow rule, it follows $\det U_p=1$. Moreover, $\log U_p\in {\rm Sym}(3)$ implies $U_p\in {\rm PSym}(3)$. The advantage of this model is that its structure w.r.t. to plasticity is identical to the infinitesimal model, while all proper invariances (objectivity and isotropy) of the geometrically nonlinear theory are retained. In contrast to multiplicative approaches the thermodynamic driving force is not the Eshelby tensor $\Sigma_{\rm E}$ and  this model gives decreasing shear stress in plastic simple shear which is physically unacceptable \cite{itskov2004application}.

Now, it could be argued that the reason for the former  deficiency is the dependence on the elastic strain $E_e^{\log}=\log U-\log U_p$, together with using the (already) non-elliptic elastic Hencky energy giving rise to a non-elliptic elastic formulation.
However, the additive logarithmic metric ansatz has another serious shortcoming which is the focus of this contribution and treated next.

\section{Rank one convexity and  plastic flow} \setcounter{equation}{0}\label{sectionrank}

Next, our goal is to understand the response of a plasticity formulation at given plastic distortion or plastic strain. Let us first consider small strain plasticity based on the additive decomposition of elastic strain
\begin{align}
{\rm sym} \nabla u=\varepsilon=\varepsilon_e+\varepsilon_p, \qquad \varepsilon_p\in {\rm Sym}(3).
\end{align}
The governing equations for isotropic perfect plasticity are
\begin{align}
{\rm Div}\, \sigma&=f,\qquad \quad
\sigma=2\, \mu\, ({\rm sym} \nabla u-\varepsilon_p)+\lambda\, ({\rm sym} \nabla u-\varepsilon_p)\, \id,\qquad
\frac{\rm d}{{\rm d} t}[{\varepsilon}_p]\in \Partial\Chi (\dev_3\Sigma_{\rm lin}),
\end{align}
 where  $\sigma$ is the Cauchy stress tensor and $\Sigma_{\rm lin}=D_{\varepsilon_p} W_{\rm lin}(\varepsilon-\varepsilon_p)$, $f$ is the body force, ${\rm Div} \sigma$ is a  vector having as components the divergence of the rows of the Cauchy stress tensor $\sigma$ and $\Partial \Chi$ is the subdifferential of the indicator function $\Chi$ of the convex elastic domain
 \begin{align}
 \mathcal{E}_{\rm e}(\Sigma_{\rm lin},\frac{2}{3}\, {\boldsymbol{\sigma}}_{\!\mathbf{y}}^2)=\left\{\Sigma_{\rm lin}\in{\rm Sym}(3) \big|\,\ \|\dev_3
\Sigma_{\rm lin}\|^2\leq\frac{2}{3}\, {\boldsymbol{\sigma}}_{\!\mathbf{y}}^2\right\}\subset  {\rm Sym}(3).
 \end{align}
  At frozen plastic straining $\varepsilon_p$, the elastically stored energy is given by
\begin{align}
W_{\rm lin}(\nabla u, \varepsilon_p)=\mu\, \|{\rm sym} \nabla u-\varepsilon_p\|^2+\frac{\lambda}{2}\, [\tr ({\rm sym} \nabla u-\varepsilon_p)]^2.
\end{align}
A simple observation is that the convexity of the function $W_{\rm lin}$ w.r.t. $\nabla u$ is not influenced by the plastic flow at all. Indeed, even the second derivative of $W_{\rm lin}$ (i.e. the elastic moduli) is unchanged. In other words, by plastic flow alone, the elastic unloading response is not touched upon.

Next, we turn to finite strain isotropic multiplicative plasticity. The system of equations can be written
\begin{align}
{\rm Div}\, S_1(F\, F_p^{-1})&=f,\quad 
S_1(F\, F_p^{-1})=D_F [W(F\, F_p^{-1})]=D_F[W(F_e)],\quad
-{F_p}\, \frac{\rm d}{{\rm d} t}[{F}_p^{-1}]&\in \Partial\Chi (\dev_3 \Sigma_{\rm E}),
\end{align}
where
$\Partial \Chi$ is the subdifferential of the indicator function $\Chi$ of the convex elastic domain
\begin{align}\mathcal{E}_{\rm e}(\Sigma_{\rm E},\frac{2}{3}\, \boldsymbol{\sigma}_{\!\mathbf{y}}^2)=\left\{\Sigma_{e}\in{\rm Sym}(3) \big|\,\ \|\dev_3
 \Sigma_{e}\|^2\leq\frac{2}{3}\, {\boldsymbol{\sigma}}_{\!\mathbf{y}}^2\right\}\subset  {\rm Sym}(3).
 \end{align}
  Here,   $\Sigma_{\rm E}$ is the elastic
 {Eshelby tensor}
$$
\Sigma_{\rm E}:=F_e^TD_{F_e}[W({F_e})]-W(F_e)\cdot \id,
$$
 driving the plastic evolution (see e.g. \cite{neff2009notes,maugin1994eshelby,cleja2000eshelby,cleja2003consequences,cleja2013orientational}) and $f$ is the body force.  As in the small strain case, rank-one convexity is preserved. However, the elastic moduli may decrease with evolving plastic flow. To substantiate  this claim let us  recall the following simple, yet fundamental observation:
\begin{lemma} \cite{HutterSFB02,Neff00b} {\rm (rank-one convexity and multiplicative decomposition)}\label{newlemma}
  If  the elastic energy $F\mapsto W(F)$ is rank-one convex,  it follows that the elasto-plastic formulation
\begin{align}
F\mapsto W(F,F_p):={W}(F\,F_p^{-1})={W}(F_e)
\end{align}
 remains rank-one convex w.r.t $F$ for all given plastic distortions $F_p$ . Hence, in the multiplicative setting, the elastic rank-one convexity is independent of the plastic flow. In multiplicative plasticity rank-one convexity is thus configuration independent.
 \end{lemma}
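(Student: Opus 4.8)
The plan is to reduce the claim to the elementary fact that rank-one convexity is preserved under composition with a fixed invertible linear map on the right. Recall that a function $F\mapsto W(F)$ is rank-one convex precisely when $t\mapsto W(F+t\,\xi\otimes\eta)$ is convex for every $F\in\GL^+(n)$ and all $\xi,\eta\in\R^n$ (equivalently, when the Legendre--Hadamard condition $\langle D^2_F W(F).(\xi\otimes\eta),\xi\otimes\eta\rangle\geq 0$ holds wherever $W$ is twice differentiable). The key observation is that for fixed $F_p\in\GL^+(n)$,
\begin{align}
(F+t\,\xi\otimes\eta)\,F_p^{-1}=F\,F_p^{-1}+t\,\xi\otimes(F_p^{-T}\eta)=F_e+t\,\xi\otimes\widetilde\eta,
\end{align}
so a rank-one line in $F$-space is mapped to a rank-one line in $F_e$-space, only with the direction $\eta$ replaced by $\widetilde\eta:=F_p^{-T}\eta$.

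Carrying this out: first I would fix $F\in\GL^+(n)$, $F_p\in\GL^+(n)$, and arbitrary $\xi,\eta\in\R^n$, and define $g(t):=W(F,F_p)$ evaluated along the perturbation, i.e. $g(t)=W\big((F+t\,\xi\otimes\eta)F_p^{-1}\big)$. Using the displayed identity, $g(t)=W\big(F_e+t\,\xi\otimes\widetilde\eta\big)$ with $F_e=F\,F_p^{-1}$ and $\widetilde\eta=F_p^{-T}\eta$. Since $F_p^{-1}$ is invertible, $F_e\in\GL^+(n)$, and $\xi\otimes\widetilde\eta$ is again a rank-one matrix. Rank-one convexity of $W$ then says exactly that $s\mapsto W(F_e+s\,\xi\otimes\widetilde\eta)$ is convex, hence $g$ is convex. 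As $F$, $F_p$, $\xi$, $\eta$ were arbitrary, $F\mapsto W(F,F_p)$ is rank-one convex. If one prefers the second-derivative formulation, one differentiates twice and obtains $\langle D^2_F[W(F,F_p)].(\xi\otimes\eta),\xi\otimes\eta\rangle=\langle D^2_{F_e}W(F_e).(\xi\otimes\widetilde\eta),\xi\otimes\widetilde\eta\rangle\geq 0$ by the Legendre--Hadamard condition for $W$.

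There is essentially no obstacle here beyond bookkeeping; the only point requiring a word of care is the chain rule, since $F\mapsto F\,F_p^{-1}$ is linear so its derivative is constant and the second-derivative identity is exact with no extra terms — this is precisely why the elastic moduli are merely \emph{transformed} (pulled back by $F_p^{-1}$ on one slot) rather than destroyed. It is worth remarking for the narrative of the paper that the same computation shows the analogous statement in the linearized setting: for $W_{\mathrm{lin}}(\nabla u,\varepsilon_p)$ the map $\nabla u\mapsto\nabla u-\id\cdot\varepsilon_p$ (more precisely $\operatorname{sym}\nabla u\mapsto\operatorname{sym}\nabla u-\varepsilon_p$) is an affine translation, so convexity — and in fact the Hessian itself — is completely unchanged, consistent with the discussion preceding the lemma. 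The contrast to be exploited in the sequel is that $F\mapsto\log U-\log U_p$ is \emph{not} of this form, because the matrix logarithm does not linearize the right multiplication by $F_p^{-1}$, i.e. $\log U-\log U_p\neq\log(U\,U_p^{-1})$ when $U$ and $U_p$ fail to commute, so the clean pull-back argument above has no counterpart and rank-one convexity can genuinely be lost.
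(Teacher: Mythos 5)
Your argument is correct and is precisely the standard one: right multiplication by the fixed invertible matrix $F_p^{-1}$ sends the rank-one line $F+t\,\xi\otimes\eta$ to the rank-one line $F\,F_p^{-1}+t\,\xi\otimes(F_p^{-T}\eta)$, so convexity along rank-one directions is inherited. The paper itself states the lemma without proof, citing \cite{HutterSFB02,Neff00b}, and your computation coincides with the proof given there; your closing remark on why no analogous pull-back exists for $\log U-\log U_p$ also matches the paper's subsequent discussion.
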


 The same constitutive invariance property is true for convexity, polyconvexity and quasiconvexity \cite{HutterSFB02,krishnan2014polyconvex}. Therefore,  the multiplicative approach is ideally suited as far as preservation of ellipticity properties for elastic unloading is concerned and marks a sharp contrast to additive logarithmic modelling frameworks, as will be seen.

 These observations, together with the experimental evidence that elastic unloading is  always seen to be  a stable process motivates us to postulate:
 \begin{postulate} {\rm [Stability at frozen plastic flow]}
 At frozen plastic flow the purely elastic response (elasticity with eigenstresses) should define a well-posed nonlinear elasticity problem in the sense that rank-one convexity is preserved. 
 \end{postulate}

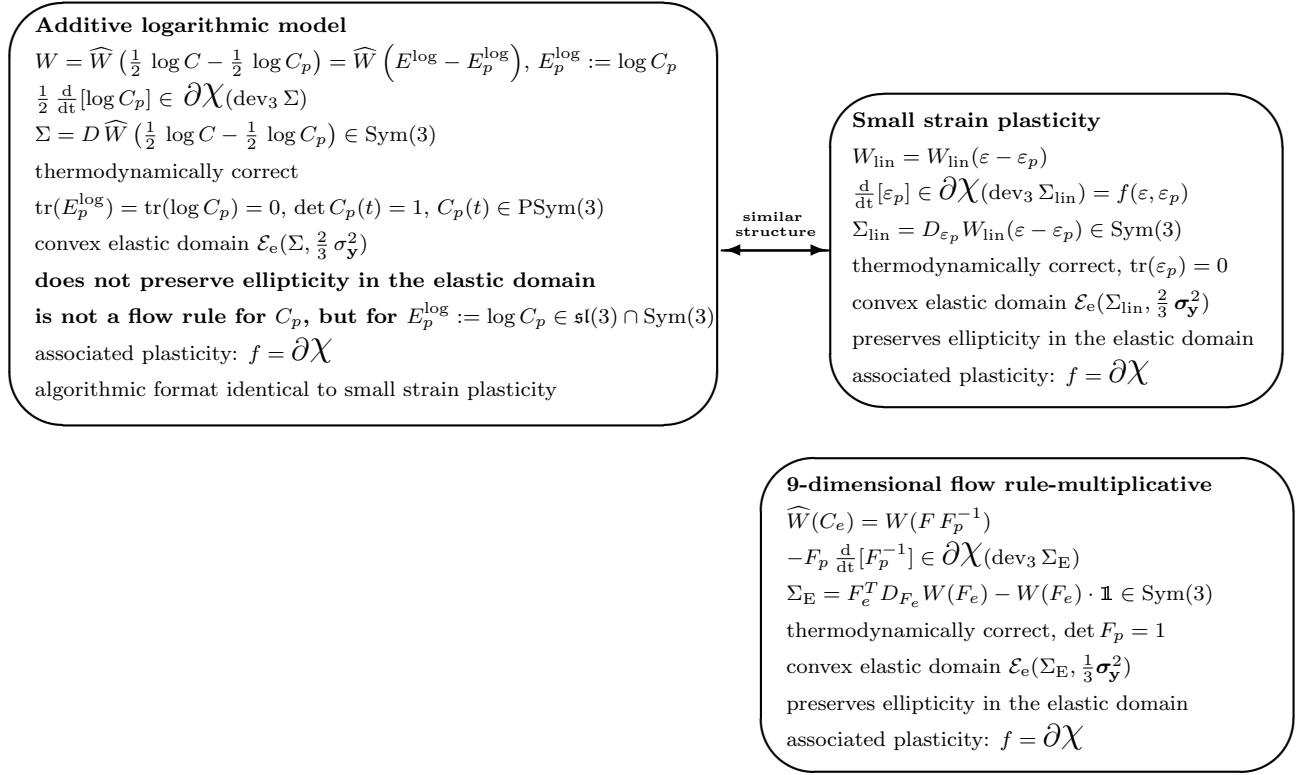
\begin{figure}
	\setlength{\unitlength}{0.97mm}
	\begin{center}
		\begin{picture}(10,90)
		\thicklines
		\put(-36,73){\oval(97,58)}
		\put(-81,98){\footnotesize{\bf Additive logarithmic model}}
		\put(-81,93){\footnotesize{$W=\widehat{W}\left(\frac{1}{2}\,\log C-\frac{1}{2}\,\log C_p\right)=\widehat{W}\left(E^{\log}-E_p^{\log}\right)$, $E_p^{\log}:=\log C_p$}}
		\put(-81,88){\footnotesize{$\frac{1}{2}\,\frac{\rm d}{{\rm d t}}[\log C_p]\in\, \Partial \Chi (\dev_3 \Sigma)$}}
		\put(-81,83){\footnotesize{$\Sigma=D\,\widehat{W}\left(\frac{1}{2}\,\log C-\frac{1}{2}\,\log C_p\right)\in{\rm Sym}(3)$}}
		\put(-81,78){\footnotesize{thermodynamically correct}}
		\put(-81,73){\footnotesize{$\tr(E_p^{\log})=\tr(\log C_p)=0$, $\det C_p(t)=1$, $C_p(t)\in{\rm PSym}(3)$}}
		\put(-81,68){\footnotesize{convex elastic domain $\mathcal{E}_{\rm e}({\Sigma},{\frac{2}{3}}\, \sigma_{\textbf{y}}^2)$}}
		\put(-81,63){\footnotesize{\bf does not preserve ellipticity in the elastic domain}}
		\put(-81,58){\footnotesize{\bf is not a flow rule for $C_p$, but for $ E_p^{\log}:=\log C_p\in\mathfrak{sl}(3)\cap{\rm Sym}(3)$}}
		\put(-81,53){\footnotesize{associated plasticity: $f= \Partial \Chi$}}
		\put(-81,48){\footnotesize{algorithmic format identical to small strain plasticity}}
		
		\put(13,68){\vector(1,0){14.5}}
		\put(28,68){\vector(-1,0){14.5}}
		\put(16,72){\bf \tiny{similar}}
		\put(15,70){\bf \tiny{structure}}

		\put(59,68){\oval(62,43)}
		\put(31,85){\footnotesize{\bf Small strain plasticity}}
		\put(31,80){\footnotesize{$W_{\rm lin}=W_{\rm lin}(\varepsilon-\varepsilon_p)$}}
		\put(31,75){\footnotesize{$\frac{\rm d}{{\rm d t}}[\varepsilon_p]\in\Partial \Chi(\dev_3 \Sigma_{\rm lin})=f(\varepsilon,\varepsilon_p)$}}
		\put(31,70){\footnotesize{$\Sigma_{\rm lin}= D_{\varepsilon_p} W_{\rm lin}(\varepsilon-\varepsilon_p)\in{\rm Sym}(3)$}}
		\put(31,65){\footnotesize{thermodynamically correct, $\tr(\varepsilon_p)=0$}}
		\put(31,60){\footnotesize{convex elastic domain $\mathcal{E}_{\rm e}({\Sigma_{\rm lin}},\frac{2}{3}\, {\boldsymbol{\sigma}}_{\!\mathbf{y}}^2)$}}
		\put(31,55){\footnotesize{preserves ellipticity in the elastic domain}}
		\put(31,50){\footnotesize{associated plasticity: $f= \Partial \Chi$}}
		
		\put(55,18){\oval(73,43)}
		\put(22,35){\footnotesize{\bf 9-dimensional flow rule-multiplicative}}
		\put(22,30){\footnotesize{$\widehat{W}(C_e)={W}(F\, F_p^{-1})$}}
		\put(22,25){\footnotesize{$-F_p\,\frac{\rm d}{{\rm d t}}[F_p^{-1}]\in\Partial \Chi(\dev_3 \Sigma_{\rm E})$}}
		\put(22,20){\footnotesize{$\Sigma_{\rm E}=F_e^T D_{F_e} {W}(F_e)-{W}(F_e)\cdot \id \in{\rm Sym}(3)$}}
		\put(22,15){\footnotesize{thermodynamically correct, $\det F_p=1$}}
		\put(22,10){\footnotesize{convex elastic domain $\mathcal{E}_{\rm e}({\Sigma_{\rm E}},\frac{1}{3}{\boldsymbol{\sigma}}_{\!\mathbf{y}}^2)$}}
		\put(22,5){\footnotesize{preserves ellipticity in the elastic domain}}
		\put(22,0){\footnotesize{associated plasticity: $f= \Partial \Chi$}}
		\end{picture}
	\end{center}
	\caption{An additive logarithmic model, additive small  strain plasticity.  All these models are associative, since all  flow rules are in the format $ \frac{\rm d}{\rm dt}[P]\, P^{-1}\in  -{\Large \partial}{\tiny} {\mbox{{\Large $\chi$}}}$ or $ \frac{\rm d}{\rm dt}[\varepsilon_p]\in  {\Large \partial}{\tiny} {\mbox{{\Large $\chi$}}}$ and for comparison the 9-dimensional flow rule based on the multiplicative decomposition.}\label{plastmodeldiagram2}
\end{figure}

\section{Additive logarithmic  plasticity does not preserve rank-one convexity during  plastic flow }\setcounter{equation}{0}

Let us start our argument by returning to a simplified one-dimensional situation for the sake of clarity.  Firstly, we observe that the classical quadratic Hencky strain energy
\begin{align}
t\mapsto (\log t)^2
\end{align}
is not convex (not rank-one convex) and therefore nothing can be gained by using the additive logarithmic anzatz
 \begin{align}
t\mapsto (\log t-\log s)^2,\qquad t\in (0,\infty)
\end{align}
where $s\in (0,\infty)$ is representing the plastic straining. However, the exponentiated Hencky energy
\begin{align}
t\mapsto e^{(\log t)^2}
 \end{align}
 is in fact a convex function of $t$ and the function
 \begin{align}\label{NR1}
 t\mapsto e^{(\log t-\log s)^2},\qquad t\in (0,\infty)
 \end{align}
 remains convex in $t$ for all $s\in (0,\infty)$  since $e^{(\log t-\log s)^2}=e^{\left(\log \frac{t}{s}\right)^2}$ is only a linear scaling  in the argument of  a convex function. Therefore in the one-dimensional setting the additive logarithmic framework preserves ellipticity (convexity).

 The last observation in \eqref{NR1}  motivated  us in a previous work \cite{NeffGhibaLankeit} to consider the following family of energies
\begin{align}\label{thdefHen}\hspace{-2mm}
 W_{_{\rm eH}}(F)=W_{_{\rm eH}}^{\text{\rm iso}}(\frac F{\det F^{\frac{1}{n}}})+W_{_{\rm eH}}^{\text{\rm vol}}(\det F^{\tel n}\cdot \id) = \left\{\begin{array}{lll}
\dd\frac{\mu}{k}\,e^{k\,\|{\rm dev}_n\log U\|^2}+\frac{\kappa}{2\widehat{k}}\,e^{\widehat{k}\,[(\log \det U)]^2}&\text{if}& \det\, F>0,\vspace{2mm}\\
+\infty &\text{if} &\det F\leq 0\,.
\end{array}\right.
\end{align}
We have called this the family of {\bf exponentiated Hencky energies}.  For the  two-dimensional situation $n=2$ and for  $\mu>0, \kappa>0$,   we have established that the functions $W_{_{\rm eH}}:\R^{2\times 2}\to \overline{\R}_+$ from the family of exponentiated Hencky type
energies are {\bf rank-one convex} \cite{NeffGhibaLankeit} for additional dimensionless material parameters $k\geq\dd\frac{1}{4}$ and $\widehat{k}\dd\geq \tel8$. Moreover, these energies are    {\bf polyconvex} \cite{NeffGhibaPoly,NeffGhibaSilhavy} and the corresponding   minimization problem
admits at least one solution.\medskip

 However, in the two-dimensional setting  the function
\begin{align}
F\mapsto W(F,U_p)=e^{\|\dev_2\log U-\dev_2\log U_p\|^2}=e^{\frac{1}{4}\|\dev_2\log C-\dev_2\log C_p\|^2}
 \end{align}
 looses ellipticity for essentially  non-coaxial plastic deformations, i.e., $C\, C_p^{-1}\neq C_p^{-1} C$. Otherwise, if we have the commutation relation $C\, C_p^{-1}=C_p^{-1} C$, then
 \begin{align}
 e^{\frac{1}{4}\|\dev_2\log C-\dev_2\log C_p\|^2}=e^{\frac{1}{4}\|\dev_2\log (C\, C_p^{-1})\|^2}
 \end{align}
 by the properties of the matrix logarithm and the
  function
 \begin{align}
 F\mapsto W(F,C_p)=e^{\frac{1}{4}\|\dev_2\log (C\, C_p^{-1})\|^2}
 \end{align}
  is  always Legendre-Hadamard elliptic  w.r.t. $F$ for  given plastic metric $C_p\in {\rm PSym}(2)$. Here, we have used that
 \begin{align}
 e^{\frac{1}{4}\|\dev_2\log (C\, C_p^{-1})\|^2}=e^{\frac{1}{4}\|\dev_2\log C_e\|^2}
 \end{align}
since the principal invariants of $C\, C_p^{-1}$ and $C_e$ are equal, and therefore the eigenvalues of $C\, C_p^{-1}$ are equal to the eigenvalues of $C_e$, see \cite{NeffGhibaPlasticity}. Then we may invoke Lemma \ref{newlemma} applied to $W(F_e)=e^{\frac{1}{4}\|\dev_2\log F_e^TF_e\|^2}$.

Therefore,  considering the family $W_{_{\rm eH}}$ of exponentiated Hencky energies, the main result of this paper  shows the following inacceptable feature: even if the elastic energy is everywhere rank-one convex as a function of $F$, i.e. $F\mapsto \widehat{W}(\log U)$ is rank-one convex, the new function $F\mapsto \widehat{W}(\log U-\log U_p)$ need not remain rank-one convex at some given plastic stretch $U_p$ (viz. $E_p^{\log}$).  This type of loss of ellipticity is relevant in elastic unloading problems at given plastic deformation. It naturally appears in computation of the elastic spring back. In other words we demonstrate that, relative to an initial natural configuration prior to the occurrence of further yielding, the elastic rank-one convexity property may be lost with development of plastic flow alone.  We note again that this type of degenerate response cannot occur in elasto plasticity based on the multiplicative decomposition of the deformation gradient into incompatible elastic and plastic distortion $F=F_e\cdot F_p$, thus giving more credit to these  latter type of models.

We consider now the isotropic exponentiated energy $F\mapsto W_{_{\rm eH}}^{\rm iso}(F)=e^{\|\dev_2\log U\|^2}$ corresponding to the material parameter $k=1$ and we provide the proof  that the new function $F\mapsto  \widehat{W}_{_{\rm eH}}^{\rm iso}(\log U-\log U_p)$ need not remain rank-one convex at some given plastic stretch $U_p$.

\begin{lemma}
The function $F\mapsto W(F)=e^{\|\dev_2\log U-\dev_2\log U_p\|^2}$ is not rank-one convex  for some given plastic stretch $U_p\in {\rm PSym}(2)$, while $F\mapsto W(F)=e^{\|\dev_2\log U\|^2}$ is  rank-one convex.
\end{lemma}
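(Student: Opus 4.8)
The plan is to disprove rank-one convexity by exhibiting an explicit plastic stretch $U_p \in {\rm PSym}(2)$, an explicit deformation gradient $F_0 \in \GLpz$, and an explicit rank-one direction $\xi\otimes\eta$ along which the second derivative of $t\mapsto W(F_0 + t\,\xi\otimes\eta)$ is strictly negative at $t=0$. The key structural fact to exploit is the one already emphasized in the excerpt: rank-one convexity is configuration-independent precisely when $C$ and $C_p$ are coaxial, because then $\log U - \log U_p = \log(U U_p^{-1})$ and Lemma~\ref{newlemma} applies to the rank-one convex elastic energy $W_{_{\rm eH}}^{\rm iso}$; so any failure must be sought in the genuinely non-coaxial regime. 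Accordingly, I would pick $U_p$ non-diagonal in the basis that diagonalizes the chosen $U$ (equivalently, pick $F_0$ and $U_p$ so that $C_0 C_p^{-1} \neq C_p^{-1} C_0$), which is exactly the situation the authors flag as dangerous.

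First I would reduce the computation to two variables by parametrizing $F$ near $F_0$: since the energy is objective and isotropic, one can use the polar decomposition and work with $U = \sqrt{F^T F}$, writing $\log U$ in terms of the two eigenvalues and the rotation angle diagonalizing $U$. A convenient simplification in $n=2$ is that $\dev_2 \log U = \tfrac12(\log\lambda_1 - \log\lambda_2)(N_1\otimes N_1 - N_2\otimes N_2)$, so $\dev_2 \log U$ is determined by the single scalar $\tfrac12\log(\lambda_1/\lambda_2)$ times a unit-norm traceless symmetric matrix carrying the eigenframe of $U$; hence $\|\dev_2\log U - \dev_2\log U_p\|^2$ is a quadratic form in these two "spin-2" directions whose cross term depends on the angle between the eigenframes of $U$ and $U_p$. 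Then $W = e^{(\cdot)}$ and I would compute $D^2 W(F_0).(\xi\otimes\eta,\xi\otimes\eta)$ via the chain rule, keeping track of the two contributions: the convex "outer" term $e^{(\cdot)}\,(DG.H)^2 \geq 0$ where $G(F)=\|\dev_2\log U-\dev_2\log U_p\|^2$, and the "inner" term $e^{(\cdot)}\, D^2 G(F_0).(H,H)$, which is the only term that can be negative. The whole point of the non-coaxiality is that, unlike in the coaxial case where $G$ inherits the rank-one convexity of $\|\dev_2\log U_e\|^2$, here $D^2 G$ picks up destabilizing contributions from the $t$-dependence of the eigenframe of $U(F_0+tH)$ relative to the fixed frame of $U_p$.

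The main obstacle is the bookkeeping of $D^2[\log U]$ along a rank-one perturbation: the derivative of the matrix logarithm is not simply $U^{-1}\dot U$ unless things commute, and one must use the integral/eigenprojection formula for $D\log$, then differentiate once more. To keep this manageable I would not attempt a general formula; instead I would fix numerically convenient values — e.g. take $U_0$ diagonal with a modest stretch ratio, $U_p$ a rotation by a fixed angle (say $45^\circ$) of a diagonal trace-free matrix of modest size, so $\det U_p = 1$ is automatic — and choose $F_0 = U_0$ (i.e. $R=\id$) together with a rank-one direction $H = \xi\otimes\eta$ aligned to maximize the off-diagonal (eigenframe-rotating) response. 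One then evaluates the Hessian quadratic form as an explicit real number and checks it is negative; alternatively, and more cleanly, one diagonalizes the $2\times 2$ acoustic-type matrix associated with the rank-one directions $\xi$ at fixed $\eta$ (or vice versa) and shows its determinant becomes negative for a suitable $U_p$, while verifying that for $U_p = \id$ (the elastic case) the same quantity stays nonnegative, consistent with the known rank-one convexity of $F\mapsto e^{\|\dev_2\log U\|^2}$ for $k=1 \geq \tfrac14$ from \cite{NeffGhibaLankeit}. I expect the computation to be routine but tedious, and the only conceptual care needed is to make sure the chosen $U_p$ is genuinely non-coaxial with $C_0$, since coaxiality would, by the argument already given in the excerpt, force nonnegativity and the construction would fail.
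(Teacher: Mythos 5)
Your overall strategy---restrict the energy to a rank-one line, choose a plastic stretch $U_p$ genuinely non-coaxial with the total stretch, and detect failure of convexity of the resulting scalar function---is exactly the strategy of the paper, and your structural observations are sound: the reduction of $\dev_2\log U$ to a single scalar times a unit traceless symmetric matrix carrying the eigenframe, the split of $D^2W$ into a nonnegative outer term $e^{G}(DG.H)^2$ and a possibly negative inner term $e^{G}\,D^2G.(H,H)$, and the necessity of non-coaxiality (otherwise Lemma \ref{newlemma} applies). Where the two arguments diverge is the certification of non-convexity. The paper works along the simple shear line $F=\id+t\,\eta\otimes\xi$, computes $U$, its eigenvalues $\lambda_1=\frac12(\sqrt{t^2+4}+t)$, $\lambda_2=1/\lambda_1$ and $\log U$ in closed form, and with $\log U_p=\left(\begin{smallmatrix} a & b\\ b & -a\end{smallmatrix}\right)$ obtains the explicit scalar function $h_{a,b}(t)=\exp\bigl(2\log^2\lambda_1-\tfrac{2\log\lambda_1}{t^2+4}(-2at+4b)+2a^2+2b^2\bigr)$. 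For $a=-2$, $b=0$ (so $U_p=\diag(e^{-2},e^{2})$, non-coaxial with $C(t)$ for every $t\neq0$), non-convexity follows from a global argument requiring no derivatives of the matrix logarithm at all: $h_{-2,0}$ is even, so convexity would force $h_{-2,0}'(0)=0$ and hence monotonicity on $[0,\infty)$, yet $h_{-2,0}(0)=e^{8}\approx 2980.96>h_{-2,0}(1)\approx 2193.36$. This entirely sidesteps what you correctly identify as the main obstacle, namely differentiating the eigenprojection formula for $D\log$ twice along a rank-one path.

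The genuine gap in your proposal is that the counterexample is never exhibited. The decisive claim---that for your chosen $U_0$, your $45^\circ$-rotated $U_p$ and your ``aligned'' rank-one direction the Hessian quadratic form is strictly negative---is stated as an expectation (``I expect the computation to be routine but tedious''), not established. For a non-convexity statement the entire burden of proof is the production and verification of one explicit negative number (or, as in the paper, one explicit violation of monotonicity), and your route leaves that number behind a second derivative of a twice-differentiated matrix logarithm that is neither computed nor reduced to something checkable by hand. Moreover, negativity of $D^2G.(H,H)$ alone does not suffice, since the nonnegative outer term may dominate; your own decomposition shows the search for $(F_0,U_p,H)$ is genuinely constrained, so a first numerical guess need not succeed and cannot be presumed to. Until the explicit configuration is fixed and the sign verified---or the local Hessian test is replaced by a closed-form global argument of the paper's type---the proof is incomplete.
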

\begin{proof}
The rank-one convexity of the isotropic exponentiated energy $F\mapsto W_{_{\rm eH}}^{\rm iso}(F)=e^{\|\dev_2\log U\|^2}$ follows as a particular case of the result established in \cite{NeffGhibaLankeit}.

In order to prove that the function $F\mapsto W(F)=e^{\|\dev_2\log U-\dev_2\log U_p\|^2}$ is not rank-one convex  for some given  $U_p\in {\rm PSym}(2)$, it suffices to consider the elastic simple shear case. We choose the vectors $\eta, \xi\in \R^3$ so that
$
\eta=\left(\begin{array}{c}
      1 \\
0
    \end{array}\right),\quad \xi=\left(\begin{array}{c}
      0 \\
1
    \end{array}\right),\quad
\eta\otimes \xi =\left(
                   \begin{array}{cc}
                     0 & 1 \\
                     0 & 0
                   \end{array}
                 \right),
$
and we will be able to find a plastic stretch $U_p\in{\rm PSym}(2)$ such that the scalar function $h:\R\to \R$,
$$
h(t)=W(\id+t(\eta \otimes\xi))=e^{\|\dev_2\log \left\{\left[\id+t(\eta \otimes\xi)\right]^T\,\left[\id+t(\eta \otimes\xi)\right]\right\}-\dev_2\log U_p\|^2}
$$
is not convex as function of $t$. This is sufficient for loss of rank-one convexity. 
Thus we consider
$
F=\id+t(\eta \otimes\xi) =\left(
                   \begin{array}{cc}
                     1 & t  \\
                     0 & 1
                   \end{array}
                 \right)$.

\begin{figure}[h!]
\centering
\begin{minipage}[h]{0.9\linewidth}
\includegraphics[scale=0.7]{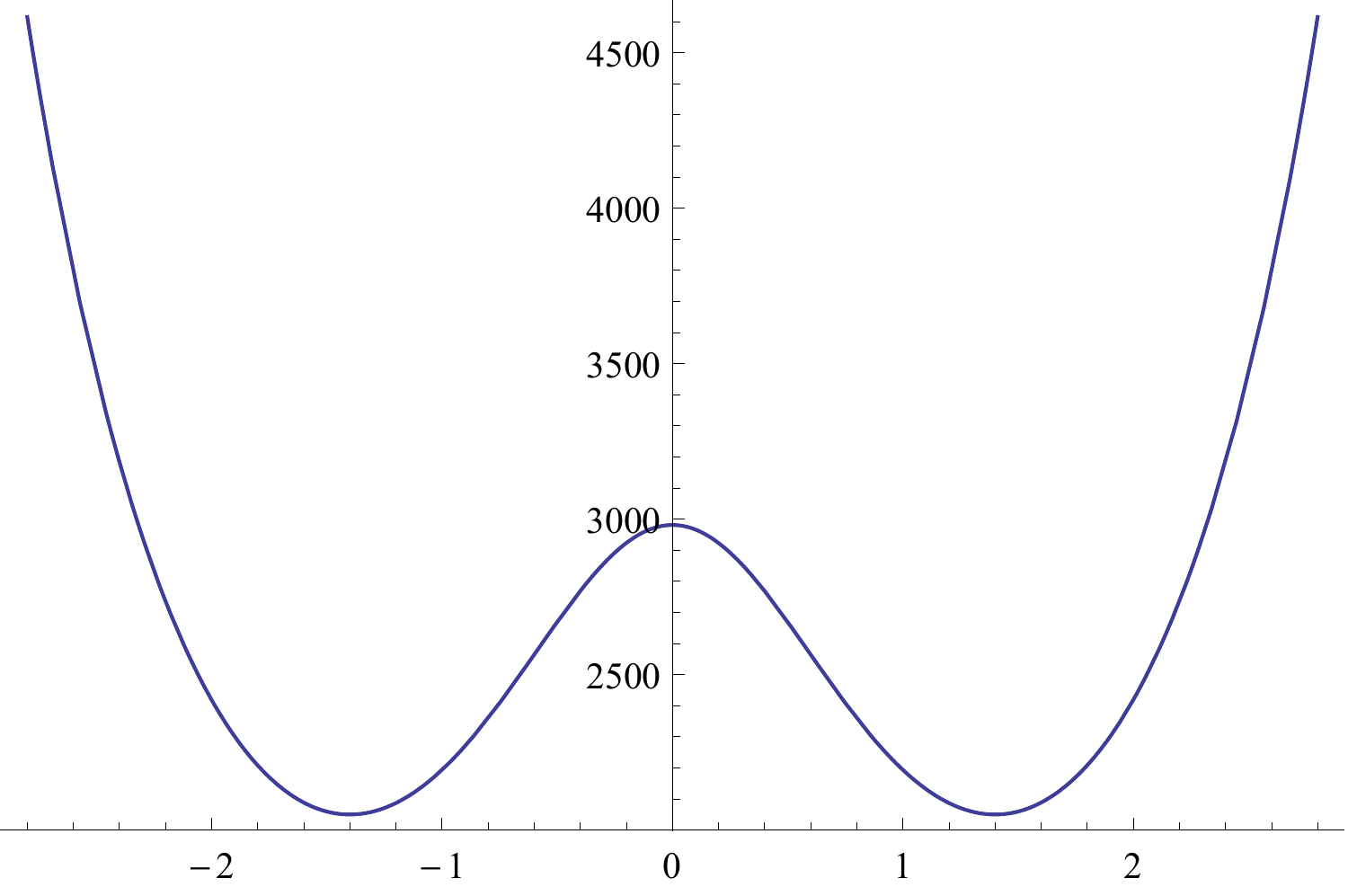}
\centering
\caption{\footnotesize{The graphical representation of the function $t\mapsto
		 h_{-2,0}(t)=e^{2\,\log^2\left[\frac{1}{2}(\sqrt{t^2+4}+t)\right]-8\,t\,\frac{\log \left[\frac{1}{2}(\sqrt{t^2+4}+t)\right]}{t^2+4}+8}
		$. }}
\label{corect-aditional}
\end{minipage}
\end{figure}

For this total  deformation  we have that the polar decomposition  $F=R\cdot U=V\cdot R$ into the right Biot stretch tensor $U=\sqrt{F^T F}$ of the deformation and the orthogonal polar factor $R$ is given by
\begin{align}
U=\frac{1}{\sqrt{t^2+4}}\left(
\begin{array}{cc}
2&t\\
t&t^2+2
\end{array}\right), \qquad
R=\frac{1}{\sqrt{t^2+4}}\left(
\begin{array}{cc}
2&t\\
-t&2
\end{array}\right).
\end{align}
Let us rewrite the right Biot stretch tensor $U$ in the following form
\begin{align}
U=\frac{\lambda_1}{\lambda_1 ^2+1}\,  \left(
	\begin{array}{cc}
	2 & \lambda_1 -\frac{1}{\lambda_1 } \vspace{1.2mm}\\
	\lambda_1 -\frac{1}{\lambda_1 } & \lambda ^2+\frac{1}{\lambda_1 ^2} \\
	\end{array}
	\right),
\end{align}
where $\lambda_1=\frac{1}{2}(\sqrt{t^2+4}+t)$ denotes the first eigenvalue of $U$.
Further, $U$ can be orthogonally diagonalized to
\begin{align}
U=Q\cdot \left(
\begin{array}{cc}
\frac{1}{\lambda_1}&0\\
0&\lambda_1
\end{array}\right)\cdot {Q^T},
\end{align}
where
\begin{align}
Q= \frac{\sqrt{\lambda_1}}{\sqrt{\lambda_1^2+1}}\,\left(
	\begin{array}{cc}
	-\sqrt{\lambda_1 } & \frac{1}{\sqrt{\lambda_1 }} \vspace{1.2mm}\\
	\frac{1}{\sqrt{\lambda_1 }} & \sqrt{\lambda_1 } \\
	\end{array}
	\right). 
\end{align}
 Hence, the principal logarithm of $U$ is
\begin{align}
\log U=Q\cdot \left(
\begin{array}{cc}
-\log\lambda_1&0\\
0&\log\lambda_1
\end{array}\right)\cdot Q^{T}=\frac{1}{\sqrt{t^2+4}}\cdot \left(
\begin{array}{cc}
-t \,\log\lambda_1&2\, \log\lambda_1\\
2\, \log\lambda_1&t\,\log\lambda_1
\end{array}\right).
\end{align}

We still need to determine a suitable plastic stretch $U_p\in {\rm SL}(2)$.
The general form of a matrix $U_p\in{\rm PSym}(2)$ such that  $\tr(\log U_p)=0$ is
\begin{align}
 \log U_p=\left(
\begin{array}{ccc}
a &b\\
b&-a
\end{array}\right)\quad
 \Rightarrow \quad  U_p=\exp\left(
\begin{array}{ccc}
a &b\\
b&-a
\end{array}\right).
\end{align}

 We  obtain that
 \begin{align}
 \|\dev_2\log U-\dev_2\log U_p\|^2&=\|\dev_2\log U\|^2-2\, \langle \dev_2\log U,\dev_2\log U_p\rangle+\|\dev_2\log U_p\|^2\\
 &=2\,\log^2\lambda_1-2\,\frac{\log \lambda_1}{t^2+4}(-2\, a\, t+4\, b)+2\, a^2+2\, b^2.\notag
 \end{align}
 With this representation we are able to disprove rank-one convexity of the function $F\mapsto e^{\|\dev_2\log U-\dev_2\log U_p\|^2}$. We have to check the convexity of the function
$$
 h_{a,b}(t)=W(\id+t(\eta \otimes\xi))=e^{2\,\log^2\left[\frac{1}{2}(\sqrt{t^2+4}+t)\right]-2\,\frac{\log \left[\frac{1}{2}(\sqrt{t^2+4}+t)\right]}{t^2+4}(-2\, a\, t+4\, b)+2\, a^2+2\, b^2}, \qquad  t\in \mathbb{R}.
$$

For our purpose, it is enough to find a matrix $U_p$, i.e. some values for $a,b\in \mathbb{R}$, such that the function $t\mapsto h_{a,b}(t)$ is not convex. For simplicity, let us choose $b=0$.
We remark from Figure \ref{corect-aditional} that for $b=0$ and $a=-2$,  i.e. for $U_p=\left(
\begin{array}{ccc}
e^{-2} &0\\
0&e^{2}
\end{array}\right)\in {\rm SL}(2)$,  the function
$$
t\mapsto h_{-2,0}(t)=e^{2\,\log^2\left[\frac{1}{2}(\sqrt{t^2+4}+t)\right]-8\,t
	\,\frac{\log \left[\frac{1}{2}(\sqrt{t^2+4}+t)\right]}{t^2+4}+8}, \qquad t\in \mathbb{R}.
$$
is not convex. In the following we prove this observation analytically.

Let us suppose that the function $h_{-2,0}$ is convex. On one hand, this assumption  implies that the first derivative  $t\mapsto h_{-2,0}'(t)$ is a monotone increasing function on $[0, \infty)$ and we would have
\begin{align}
h_{-2,0}'(t)\geq h_{-2,0}'(0) \qquad \forall t>0.
\end{align} 
On the other hand, since $h_{-2,0}(t)=h_{-2,0}(-t) \,\,\forall \, t\in\mathbb{R}$, we deduce that
$
h_{-2,0}'(t)=-h_{-2,0}'(-t) \ \text{for all}\ t\in\mathbb{R},
$
which implies that
$
h_{-2,0}'(0)=0.
$
Hence, we have obtained that
\begin{align}
h_{-2,0}'(t)\geq0 \qquad \forall t>0,
\end{align} which means that $h_{-2,0}$ is a monotone increasing function on $[0,\infty)$. However, this is not true, since
\begin{align}
h_{-2,0}(0)=e^8\approx 2980.96 \qquad \text{and}\qquad h_{-2,0}(1)\approx 2193.36.
\end{align}
Therefore, the assumption that the function $h_{-2,0}$ is convex leads  to a contradiction.

  In conclusion, the function $h_{-2,0}$ is not convex, which implies that for $U_p$ chosen as  above the energy $F\mapsto e^{\|\dev_2\log U-\dev_2\log U_p\|^2}$ is not rank-one convex, while $F\mapsto e^{\|\dev_2\log U\|^2}$ is rank-one convex \cite{NeffGhibaLankeit}. This final remark completes the proof.
\end{proof}

\section{Final remarks}

We have shown that the multiplicative plasticity models preserve ellipticity in purely elastic processes at frozen plastic variable provided that the initial elastic response is elliptic, see \cite{NeffGhibaPlasticity}.  Preservation of Legendre-Hadamard ellipticity is, in our view, a property which should be satisfied by any hyperelastic-plastic model since the elastically unloaded material specimen should respond reasonably under further purely elastic loading. However, the much used additive logarithmic model does not preserve  Legendre-Hadamard ellipticity in general. In Figure \ref{plastmodeldiagram2} we  summarize some properties of the additive logarithmic model and the multiplicative decomposition and we compare it with the small strain plasticity model.  In contrast, the formulation  based on $e^{\|\dev_2\log C_e\|^2}$ and the multiplicative decomposition remains always rank-one convex. Moreover, the change of a given FEM-implementation
of $W_{_{\rm H}}$ into $W_{_{\rm eH}}$ is nearly free of costs \cite{tanaka1979finite,masud1997finite,naghdabadi2012viscoelastic,heiduschke1996computational}.
 For more constitutive issues regarding the interesting properties of $W_{_{\rm eH}}$ we refer to \cite{NeffGhibaLankeit,montella2015exponentiated}.

\bibliographystyle{plain} 
\addcontentsline{toc}{section}{References}
\begin{footnotesize}

\end{footnotesize}

\end{document}